\newtheorem{theorem}{Theorem}[section]
\newtheorem{thm}[theorem]{Theorem}
\newtheorem{cor}[theorem]{Corollary}
\theoremstyle{definition}
\newtheorem{dfn}[theorem]{Definition}
\def\ni{\noindent}
\author{Jessica Pereira\footnote{\textbf{E-mail:}jessica@goa.bits-pilani.ac.in}\ \  \ and Tarkeshwar Singh\footnote{\textbf{E-mail:}tksingh@goa.bits-pilani.ac.in}\\
Department of Mathematics,\\ Birla Institute of Technology and
Science Pilani\\ K K Birla, Goa Campus, NH-17B, Zuarinagar,\\
Goa, India \\ \\ S. Arumugam\footnote{{\bf E-mail:}s.arumugam.klu@gmail.com}\\National Centre for Advanced Research in Discrete Mathematics \\ Kalasalingam University\\
Anand Nagar, Krishnankoil-626 126, Tamil Nadu, India.}
\title{\large \textbf{On $(k, d)$ Hooked Skolem Graceful Graphs}}
\date{}
\begin{document}
\maketitle
\begin{abstract}
A graph $(p, q)$ graph $G = (V, E)$ is said to be $(k, d)$-hooked Skolem graceful if there
exists a bijection $f:V (G)\rightarrow \{1, 2, \dots, p-1, p+1\}$ such that the induced
edge labeling $g_f : E \rightarrow \{k, k+d, \dots, k+(n-1)d \}$ defined by $g_f (uv) =
|f(u) - f(v)|$ $\forall uv \in E$ is also bijective, where $k$ and $d$ are positive integers.
Such a labeling $f$ is called $(k, d)$-hooked Skolem graceful labeling of $G.$
Note that when $k = d = 1$, this notion coincides with that of Hooked Skolem (HS)
graceful labeling of the graph G.
In this paper, we present some preliminary results on $(k, d)$-hooked Skolem graceful graphs and prove that $nK_2$ is $(2, 1)$-hooked Skolem graceful if and only if $n \equiv 1~\mbox{or}~2(\bmod~ 4)$.

\end{abstract}
\textbf{2010 MATHEMATICS SUBJECT CLASSIFICATION}:\\
\textit{\textbf{05C 78}.\\}

\textbf{KEY WORDS}: Hooked sequence, hooked Skolem graceful graphs

\section{\bf Introduction}
By a graph $G = (V, E)$, we mean a finite undirected graph without loops
or multiple edges. The order $|V|$ and the size $|E|$ of $G$ are denoted by $p$
and $q$ respectively. For graph theoretic terminology and notations we refer to
West \cite{wes}.

\bigskip

While studying the structure of Steiner triple systems, Skolem \cite{sko} consid-
ered the following problem: {\it{Is it possible to distribute the numbers $1, 2, \dots, 2p$ into $n$ pairs $(a_i, b_i)$ such
that we have $b_i- a_i = i$ for $i = 1, 2, \dots, p$?}}
In the sequel, a set of pairs of this kind is called $1, +1$ system because the difference $b_i-a_i$ begins with $1$ and increases by $1$ when $i$ increases by $1$. Skolem \cite{sko} proved that a $1, +1$ system exists if and only if $p \equiv 0 ~ \mbox{or} ~ 1(\bmod~4)$. A $1, +1$ system is also known as Skolem sequence, which is defined as follows.

\bigskip

\begin{dfn}
Let $\langle C_i \rangle$ be a sequence of $2p$ terms, where $1 \leq C_i \leq p$. If each number $i$ occurs exactly twice in the sequence and $|j_2 - j_1| = i$ if $i = C_{j_1} = C_{j_2}$ then $\langle C_i \rangle$ is called a {\it Skolem sequence.}
\end{dfn}

\bigskip

This concept was used by Lee and Shee \cite{lee} to introduce the notion of Skolem gracefulness of graphs.

\bigskip

\begin{dfn}
{\it A Skolem graceful labeling} of a $(p, q)$ graph $G = (V, E)$ is a bijection $f : V \rightarrow \{1, 2, \dots, p\}$ such that the induced labeling $g_f : E \rightarrow \{1, 2, \dots, q\}$ defined by $g_f (uv) = |f(u) - f(v)|$ is also a bijection. If such a labeling exists then the graph $G$ is called a {\it Skolem graceful graph}. 
\end{dfn}

\bigskip

If a graph $G$ with $p$ vertices and $q$ edges, is graceful then $q \geq p - 1$, while if it is Skolem graceful, then $q \leq p - 1$.
Thus, as noted in \cite{lee}, Skolem graceful labelings nearly complement graceful labelings, and a graph with $q = p - 1$ is graceful if and only if it is Skolem graceful.

\bigskip

O'Keefe \cite{kee} extended the methods of Skolem sequences for $k \equiv 2 ~ \mbox{or} ~ 3(\bmod~4)$ by showing that the numbers $1, 2, \dots, 2k-1, 2k+1$ can be distributed into $k$ disjoint pairs $(a_i, b_i)$ such that $b_i = a_i + i$ for $i = 1, 2, \dots, k$.

\bigskip

Motivated by this, Shalaby \cite{sha} defined the notion of hooked Skolem sequences as follows.

\bigskip

\begin{dfn}
{\it A hooked Skolem sequence} (HS) of order $k$ is a sequence $(c_1, c_2, \dots, c_{2k+1})$ of $2k + 1$ integers satisfying the following conditions:
\begin{enumerate}
\item For every $r \in \{1, 2, \dots, k\}$ there exist exactly two elements $c_i$ and $c_j$ such
that $c_i = c_j = r$.
\item If $c_i = c_j = r$ with $i < j$, then $j - i = r$.
\item  $c_{2n} = 0$.
\end{enumerate}
\end{dfn}

\bigskip

In \cite{sim} a {\it hooked sequence} is defined as a sequence $\{d, d+1, \dots, d+m-1\}$ for which there is a partition of the set $\{1, 2, \dots, 2m-1, 2m+1\}$ into $m$ pairs $(a_i, b_i)$ such that the $m$ numbers $b_i-a_i$, $1 \leq i \leq m$ are all of the integers $d, d+1, \dots, d+m-1$. Where $a_i$ and $b_i$ are interpreted as the two positions in the sequence where $b_i-a_i$ appears. For example $48574365387*6$ and $64758463573*8$ are hooked sequences where $d=3$ and $m=6$.  

\bigskip

In \cite{TS2} a hooked Skolem graceful graph is defined as follows: A $(p, q)$ graph $G=(V, E)$ is said to be {\it hooked Skolem graceful} if there exists a bijection $f:V(G) \rightarrow \{1, 2, \dots, p-1, p+1\}$ such that the induced edge labeling $g_f:E \rightarrow \{1, 2, \dots, q\}$ defined by $g_f(uv)~=~|f(u)-f(v)|,~\forall uv \in E$ is also bijective. Such a labeling $f$ is called {\it hooked Skolem graceful labeling} of $G$.

\bigskip

In this paper, we introduce the notion of $(k, d)$-hooked Skolem graceful graph as follows:

\bigskip

\begin{dfn}
A $(p, q)$ graph $G=(V, E)$ is said to be {\it $(k, d)$-hooked Skolem graceful} if there exists a bijection $f:V(G) \rightarrow \{1, 2, \dots, p-1, p+1\}$ such that the induced edge labeling $g_f:E \rightarrow \{k, k + d, k + 2d, \dots, k + (q-1)d\}$ defined by $g_f(uv)~=~|f(u)-f(v)|~\forall uv \in E$ is also bijective, where $k$ and $d$ are positive integers. Such a labeling $f$ is called {\it $(k,d)$-hooked Skolem graceful labeling} of $G$.
\end{dfn}

\section{\bf Main Results}
It follows from the definition that if $G$ is $(k, d)$-hooked Skolem graceful, then $q \leq p-1$. For any two disjoint subsets $A$ and $B$ of $V$, we denote by $m(A, B)$ the number of edges of $G$ with one end in $A$ and the other end in $B$.\\

\begin{theorem}
Let $k$ and $d$ be two positive integers which are not simultaneously even. If $G$ is $(k, d)$-hooked Skolem graceful, then $V(G)$ can be partitioned into two subsets $V_o$ and $V_e$ satisfying the following conditions.
\begin{enumerate}
\item $m(V_o,V_e)=\lfloor\frac{q+1}{2}\rfloor$ if $k$ and $d$ are both odd.
\item $m(V_o,V_e)=\lfloor\frac{q}{2}\rfloor$ if $k$ is even and $d$ is odd.
\item $m(V_o,V_e)=q$ if $k$ is odd and $d$ is even.
\end{enumerate}
\end{theorem}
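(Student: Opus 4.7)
The plan is to partition the vertex set according to the parity of the labels assigned by the $(k,d)$-hooked Skolem graceful labeling $f$. Define
\[
V_o = \{v \in V(G) : f(v) \text{ is odd}\}, \qquad V_e = \{v \in V(G) : f(v) \text{ is even}\},
\]
so that $\{V_o, V_e\}$ is a partition of $V(G)$. The key observation is that for any edge $uv$, the induced label $g_f(uv) = |f(u)-f(v)|$ is odd if and only if $f(u)$ and $f(v)$ have different parities; equivalently, if and only if exactly one of $u,v$ lies in $V_o$. Consequently
\[
m(V_o, V_e) \;=\; \#\bigl\{e \in E(G) : g_f(e) \text{ is odd}\bigr\}.
\]
Since $g_f$ is a bijection onto $\{k, k+d, k+2d, \dots, k+(q-1)d\}$, the problem reduces to counting how many terms of this arithmetic progression are odd in each of the three parity regimes.

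When both $k$ and $d$ are odd, the parities of the successive terms $k+id$ alternate, starting from odd at $i=0$; hence exactly $\lceil q/2 \rceil = \lfloor (q+1)/2 \rfloor$ of them are odd, yielding case (1). When $k$ is even and $d$ is odd, the parities again alternate but now start from even, so exactly $\lfloor q/2 \rfloor$ of the $q$ terms are odd, yielding case (2). When $k$ is odd and $d$ is even, every term $k+id$ is odd, so all $q$ edges cross between $V_o$ and $V_e$, yielding $m(V_o,V_e)=q$ as in case (3). The hypothesis that $k$ and $d$ are not both even is exactly what guarantees that the progression $\{k+id\}$ is not entirely even (the missing fourth case), which is why it is excluded.

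The argument is essentially a parity count, so there is no substantive obstacle. The only point requiring mild care is the distinction between $\lfloor q/2 \rfloor$ and $\lfloor (q+1)/2 \rfloor$ depending on whether the alternation begins on an odd or on an even term, which I would verify by checking the small cases $q$ even and $q$ odd separately in cases (1) and (2).
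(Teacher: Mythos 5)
Your proposal is correct and follows essentially the same route as the paper: partition $V(G)$ by the parity of the vertex labels, observe that an edge label $|f(u)-f(v)|$ is odd exactly when the edge crosses between $V_o$ and $V_e$, and then count the odd terms of the arithmetic progression $k, k+d, \dots, k+(q-1)d$. The paper states this counting step only implicitly ("hence the result follows"), so your explicit case analysis of the three parity regimes is simply a more detailed write-up of the same argument.
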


\begin{proof}
Let $f$ be a $(k,d)$-hooked Skolem graceful labeling of $G$. Let $V_o=\{u \in V(G):f(u) \ \mbox{is odd}\}$ and $V_e=V(G)-V_o$. then $g_f(G)$ is odd if and only if the corresponding edge joins a vertex of $V_o$ and a vertex of $V_e$ and hence the result follows.
\end{proof}

\bigskip

In the following theorem we investigate the existence of $(k, d)$-hooked Skolem graceful labeling for $nK_2$.

\bigskip

\begin{thm}\label{th}
If $nK_2$ is $(k,d)$-hooked Skolem graceful, then one of the following holds.
\begin{enumerate}
\item $n \equiv 1(\bmod~4)$, then $k$ is even.\label{a}
\item $n \equiv 2(\bmod~4)$, then $d$ is odd.\label{b}
\item $n \equiv 3(\bmod~4)$, then both $k$ and $d$ are even or they are odd.\label{c}
\end{enumerate}
\end{thm}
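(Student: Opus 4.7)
The plan is to use a global parity count of the edge labels. For any edge $uv$ we have $|f(u)-f(v)| \equiv f(u)+f(v) \pmod 2$, and since $nK_2$ is a perfect matching every vertex appears in exactly one edge. Summing the edge labels therefore yields
\[
\sum_{uv \in E(nK_2)} g_f(uv) \;\equiv\; \sum_{v \in V(nK_2)} f(v) \pmod 2.
\]

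First I would evaluate the right-hand side. Since $f$ is a bijection onto $\{1,2,\dots,2n-1,2n+1\}$, the sum of vertex labels is $\tfrac{(2n+1)(2n+2)}{2}-2n = 2n^2+n+1$, whose parity is that of $n+1$. Next I would evaluate the left-hand side: the edge labels form the arithmetic progression $k,k+d,\dots,k+(n-1)d$, so their sum is $nk+d\binom{n}{2}=nk+\tfrac{dn(n-1)}{2}$. Combining these gives the key congruence
\[
nk+\tfrac{dn(n-1)}{2} \;\equiv\; n+1 \pmod 2.
\]

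The remainder of the proof is a case analysis on $n \bmod 4$, where the only thing to track carefully is the parity of $\tfrac{n(n-1)}{2}$. If $n \equiv 1 \pmod 4$, then $n$ is odd and $\tfrac{n(n-1)}{2}$ is even, so the congruence reduces to $k \equiv 0 \pmod 2$, giving (\ref{a}). If $n \equiv 2 \pmod 4$, then $nk$ is even while $\tfrac{n(n-1)}{2}=\tfrac{n}{2}(n-1)$ is odd (odd times odd), so the congruence reduces to $d \equiv 1 \pmod 2$, giving (\ref{b}). If $n \equiv 3 \pmod 4$, then $n$ is odd and $\tfrac{n-1}{2}$ is odd, so $\tfrac{n(n-1)}{2}$ is odd, and the congruence becomes $k+d \equiv 0 \pmod 2$, meaning $k$ and $d$ share their parity, giving (\ref{c}).

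There is really no technical obstacle; the argument is a one-line parity identity followed by bookkeeping. The mildly delicate point is just getting the parity of $\tfrac{n(n-1)}{2}$ right in each residue class, and noting that the case $n\equiv 0\pmod 4$ yields $0\equiv 1\pmod 2$, a contradiction which is why that case is omitted from the conclusion (and which in fact recovers the nonexistence half of the abstract's characterization).
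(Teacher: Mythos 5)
Your proposal is correct and is essentially the paper's own argument: the identity $|f(u)-f(v)|\equiv f(u)+f(v)\pmod 2$ summed over the perfect matching is exactly the paper's step of adding the edge-label sum to the vertex-label sum to obtain the even quantity $2\sum b_i$, and the ensuing case analysis on $n\bmod 4$ via the parity of $nk+\tfrac{n(n-1)}{2}d$ matches the paper's. Your explicit treatment of all four residue classes (including the contradiction at $n\equiv 0\pmod 4$) is a slightly more complete write-up than the paper's, which only works out the case $n\equiv 1\pmod 4$ in detail.
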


\begin{proof}
Let $f$ be a $(k, d)$-hooked Skolem graceful labeling of $nK_2$. Let $e_i = u_iv_i$ be
the components of $nK_2$ and let $f(u_i) = a_i$, $f(v_i) = b_i$ and $b_i > a_i$, $1 \leq i \leq n$.
Since the set of vertex labels is $\{1, 2, \dots, 2n-1, 2n+1\}$ and the set of edge labels is
$\{k, k + d, \dots, k + (n - 1)d\}$, we have
\begin{align}\label{1}
\sum\limits_{i=1}^n (b_i - a_i)  = k +(k+d)+ \dots + k+(n-1)d
\end{align}
\begin{align}\label{2}
\begin{split}
\sum\limits_{i=1}^n b_i + \sum\limits_{i=1}^n a_i  & = 1 + 2 + \dots + (2n-1) + (2n+1) \\
                                    & = n(2n-1) + (2n+1)
\end{split}
\end{align}
On adding (\ref{1}) and (\ref{2}) we have,
\begin{align*}
\sum\limits_{i=1}^n b_i = nk+2n^2+n+1+\frac{n(n-1)}{2}d
\end{align*}

If $n \equiv 1(\bmod~4)$, then $2n^2+n+1+\frac{n(n-1)}{2}$ is even and hence $k$ is even. Hence condition {\ref{a}} holds. By a similar argument conditions {\ref{b}} and {\ref{c}} can be proved.
\end{proof}

\bigskip

\begin{cor}
The necessary conditions for the sequence $\{d, d+1, d+2, \dots, d+m-1\}$ to be hooked are:
\begin{enumerate}
\item $m(m+1-2d)+2 \geq 0$ and
\item $m \equiv 2~ \mbox{or}~ 3(\bmod~4)$ for $d$ odd and $m \equiv 2~ \mbox{or}~ 1(\bmod~4)$ for $d$ even.
\end{enumerate}
\end{cor}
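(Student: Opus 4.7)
The key observation is that a hooked sequence $\{d, d+1,\dots, d+m-1\}$ is the same object as a $(d,1)$-hooked Skolem graceful labeling of $mK_2$: the pairs $(a_i,b_i)$ in the definition of hooked sequence serve as the endpoint pairs of the $m$ copies of $K_2$, the vertex labels fill $\{1,\dots,2m-1,2m+1\}$, and the differences $b_i-a_i$ are precisely the edge labels forming $\{d,d+1,\dots,d+m-1\}$. I would open the proof by stating this identification explicitly and warning of a notation clash: the corollary's $d$ plays the role of Theorem~\ref{th}'s $k$, while Theorem~\ref{th}'s $d$ (the common difference of the edge labels) equals $1$ throughout this application.

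For condition (1), the plan is to use only the sum identity $\sum_{i=1}^m (b_i-a_i)=md+\tfrac{m(m-1)}{2}$ and bound the left-hand side from above. Any partition of $\{1,2,\dots,2m-1,2m+1\}$ into pairs $(a_i,b_i)$ with $a_i<b_i$ satisfies $\sum(b_i-a_i)\le(\text{sum of the largest } m\text{ elements})-(\text{sum of the smallest } m\text{ elements})$. The smallest $m$ are $\{1,\dots,m\}$ with sum $\tfrac{m(m+1)}{2}$; the largest $m$ are $\{m+1,\dots,2m-1,2m+1\}$ with sum $\tfrac{3m^2+m+2}{2}$; the difference is $m^2+1$. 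Substituting gives $md+\tfrac{m(m-1)}{2}\le m^2+1$, which rearranges exactly to $m(m+1-2d)+2\ge 0$.

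For condition (2), the plan is to invoke Theorem~\ref{th} with $n=m$, the theorem's $k$ replaced by the corollary's $d$, and the theorem's $d$ replaced by $1$. The three cases then translate cleanly: $m\equiv 1\pmod 4$ forces $d$ to be even; $m\equiv 2\pmod 4$ imposes nothing new (the step $1$ is already odd); $m\equiv 3\pmod 4$ forces $d$ to be odd (same parity as the step). The residue $m\equiv 0\pmod 4$ is absent from Theorem~\ref{th} because the identity $2\sum b_i=md+\tfrac{m(m-1)}{2}+2m^2+m+1$ derived in its proof has an odd right-hand side whenever $m\equiv 0\pmod 4$ (both $md$ and $\tfrac{m(m-1)}{2}$ are even, while $2m^2+m+1$ is odd), so no hooked sequence exists in that case irrespective of the parity of $d$. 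This last observation is precisely what excludes $m\equiv 0\pmod 4$ from both sub-cases of condition (2).

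The main obstacle is nothing conceptually deep — it is the bookkeeping around the notation clash between the two different ``$d$''s, together with the small combinatorial computation of the maximal pair-sum on the ground set $\{1,\dots,2m-1,2m+1\}$ (where the ``gap'' at $2m$ and the ``extra'' element $2m+1$ must be tracked carefully).
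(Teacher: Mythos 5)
Your proposal is correct, and for condition (2) it follows exactly the route the paper intends: identify a hooked sequence $\{d,d+1,\dots,d+m-1\}$ with a $(d,1)$-hooked Skolem graceful labeling of $mK_2$ and read off the parity constraints from Theorem~\ref{th} (your handling of the notation clash and of the excluded residue $m\equiv 0\pmod 4$, which the theorem leaves implicit, is accurate). The paper in fact prints no proof of the corollary at all, and Theorem~\ref{th} alone cannot yield condition (1); your extremal estimate $\sum(b_i-a_i)\le m^2+1$ over the ground set $\{1,\dots,2m-1,2m+1\}$, combined with $\sum(b_i-a_i)=md+\tfrac{m(m-1)}{2}$, is the missing ingredient, and your arithmetic there checks out.
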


\bigskip

The following theorem gives the necessary and sufficient condition for $nK_2$ to be $(2,1)$-hooked Skolem graceful.

\bigskip

\begin{theorem}
The graph $nK_2$ is $(2, 1)$-hooked Skolem graceful if and only if $n \equiv 1 ~ \mbox{or} ~ 2(\bmod~4)$.
\end{theorem}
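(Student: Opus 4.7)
The plan is to prove necessity and sufficiency separately. For necessity, I would apply Theorem~\ref{th} with $k=2,\ d=1$: the residue $n\equiv 3\pmod 4$ is ruled out by condition (\ref{c}) since $k$ is even while $d$ is odd, and conditions (\ref{a}), (\ref{b}) are automatically satisfied in the remaining admissible cases. Theorem~\ref{th} is silent on $n\equiv 0\pmod 4$, so I would rerun the summation argument from its proof. Adding equations (\ref{1}) and (\ref{2}) gives
\begin{align*}
2\sum_{i=1}^n b_i \;=\; nk + \tfrac{n(n-1)}{2}d + 2n^2 + n + 1.
\end{align*}
Plugging in $n=4m$, $k=2$, $d=1$ makes the right-hand side odd, contradicting the integrality of $\sum b_i$. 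This leaves exactly $n\equiv 1$ or $2\pmod 4$ as necessary.

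For sufficiency I would exhibit an explicit $(2,1)$-hooked Skolem graceful labeling of $nK_2$ in each admissible residue class. Concretely the task is to partition $\{1,2,\dots,2n-1,2n+1\}$ into $n$ ordered pairs $(a_i,b_i)$, $2\le i\le n+1$, with $b_i-a_i=i$. This is precisely a hooked sequence with parameters $d=2$, $m=n$ in the sense recalled in the excerpt, and the necessary parity conditions from the Corollary match the two allowed residues.

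My strategy would be to write $n=4t+1$ and $n=4t+2$ as two separate families, give explicit formulas for the pair realising each difference, and then check (i) that the $a_i, b_i$ together list $\{1,\dots,2n-1,2n+1\}$ exactly once and (ii) that the differences form $\{2,3,\dots,n+1\}$. I would seed the construction with small hand-worked cases, for example $n=1,2,5,6$ (e.g., for $n=5$ the pairs $(1,3),(6,9),(4,8),(2,7),(5,11)$ give differences $2,3,4,5,6$), and then use a block pattern for larger $t$: pairs near the top of the range, culminating in the one that uses the hook label $2n+1$, realise the large differences, while labels near the bottom are grouped into consecutive blocks to realise the small differences. This parallels the O'Keefe/Shalaby constructions for hooked Skolem sequences, shifted by one so that the smallest difference is $2$ rather than $1$.

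The main obstacle is bookkeeping: the hook $2n+1$ (with $2n$ absent) must land in the correct slot in each family, so the base cases have to be tuned separately for $n\equiv 1$ and $n\equiv 2\pmod 4$, and the block formulas must stitch together without reusing a label or a difference. Once the pairs are written down, the coverage and difference checks reduce to arithmetic on arithmetic progressions; the delicate step is aligning the hook with the endpoint of the large-difference block, and this is where the case split by residue genuinely matters.
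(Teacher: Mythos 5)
Your necessity argument is complete and in fact slightly more careful than the paper's own: the paper simply asserts that Theorem~\ref{th} rules out $n\equiv 0$ and $n\equiv 3\pmod 4$, even though Theorem~\ref{th} as stated says nothing about $n\equiv 0\pmod 4$. You correctly notice this and close the hole by rerunning the parity computation from equations (\ref{1}) and (\ref{2}) to show that $n\equiv 0\pmod 4$ forces an odd right-hand side for $2\sum b_i$; together with condition (\ref{c}) eliminating $n\equiv 3\pmod 4$ (since $k=2$ is even and $d=1$ is odd), this gives exactly the stated necessary condition. Your reformulation of the sufficiency problem as a hooked Langford-type sequence with smallest difference $2$ is also correct, and your worked example for $n=5$ checks out.

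The genuine gap is in the sufficiency direction: you describe a strategy --- split into the families $n=4t+1$ and $n=4t+2$, tune base cases, and stitch together block formulas for the pairs --- but you never actually write down the general formulas or verify that they cover $\{1,\dots,2n-1,2n+1\}$ and produce the differences $\{2,3,\dots,n+1\}$. That construction \emph{is} the content of the converse half; the paper supplies it explicitly (piecewise formulas for $f(a_i)$ and $f(b_i)$ for $n=4r-2$, $r\ge 4$, and $n=4r-3$, $r\ge 3$, with the small cases $n=1,2,5,6,10$ exhibited separately), and the ``bookkeeping'' you defer is precisely where the difficulty lies. As written, your proposal establishes necessity but only outlines sufficiency. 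Note that if you instead invoked Simpson's theorem from \cite{sim} --- which shows the necessary conditions in the Corollary are also sufficient for a hooked sequence to exist --- the case $d=2$, $m=n$ would immediately yield the converse; but you cite the Corollary only as a consistency check on the residues, not as an existence result, so that route is not actually taken either.
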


\begin{proof}
If $nK_2$ is $(2, 1)$-hooked Skolem graceful, then it follows from Theorem \ref{th} that $n \equiv 1 ~ \mbox{or}~ 2(\bmod~4)$. \\

\ni Conversely, let $n \equiv 1 ~ \mbox{or} ~ 2(\bmod~4)$. Let $e_i=a_ib_i$ be the edges of $nK_2$ with $b_i > a_i$, $1 \leq i \leq n.$\\

\ni{\bf{Case 1:}}\quad $n \equiv 2(\bmod~4)$.

Let $n = 4r-2$, where $r$ is a positive integer. For $r=1,~ 2$ and $3$, the $(2,1)$-hooked Skolem graceful labeling of $2K_2$, $6K_2$ and $10K_2$ are given in Figure {\ref{fig4.1}}.

\begin{figure}[h!]
\begin{center}
\scalebox{1} 
{
\begin{pspicture}(0,-5.120625)(7.53,5.120625)
\psdots[dotsize=0.2](3.34,4.5771875)
\psdots[dotsize=0.2](3.34,3.1771874)
\psdots[dotsize=0.2](4.16,4.5771875)
\psdots[dotsize=0.2](4.16,3.1771874)
\psdots[dotsize=0.2](1.72,1.0171875)
\psdots[dotsize=0.2](1.72,-0.3828125)
\psdots[dotsize=0.2](2.54,1.0171875)
\psdots[dotsize=0.2](2.54,-0.3828125)
\psdots[dotsize=0.2](3.34,1.0171875)
\psdots[dotsize=0.2](3.34,-0.3828125)
\psdots[dotsize=0.2](4.16,1.0171875)
\psdots[dotsize=0.2](4.16,-0.3828125)
\psdots[dotsize=0.2](4.94,1.0371875)
\psdots[dotsize=0.2](4.94,-0.3628125)
\psdots[dotsize=0.2](5.76,1.0371875)
\psdots[dotsize=0.2](5.76,-0.3628125)
\psdots[dotsize=0.2](1.72,-2.5828125)
\psdots[dotsize=0.2](1.72,-3.9828124)
\psdots[dotsize=0.2](2.54,-2.5828125)
\psdots[dotsize=0.2](2.54,-3.9828124)
\psdots[dotsize=0.2](3.34,-2.5828125)
\psdots[dotsize=0.2](3.34,-3.9828124)
\psdots[dotsize=0.2](4.16,-2.5828125)
\psdots[dotsize=0.2](4.16,-3.9828124)
\psdots[dotsize=0.2](4.94,-2.5628126)
\psdots[dotsize=0.2](4.94,-3.9628124)
\psdots[dotsize=0.2](5.76,-2.5628126)
\psdots[dotsize=0.2](5.76,-3.9628124)
\psdots[dotsize=0.2](0.1,-2.5628126)
\psdots[dotsize=0.2](0.1,-3.9628124)
\psdots[dotsize=0.2](0.92,-2.5628126)
\psdots[dotsize=0.2](0.92,-3.9628124)
\psdots[dotsize=0.2](6.52,-2.5428126)
\psdots[dotsize=0.2](6.52,-3.9428124)
\psdots[dotsize=0.2](7.34,-2.5428126)
\psdots[dotsize=0.2](7.34,-3.9428124)
\usefont{T1}{ptm}{m}{n}
\rput(3.446875,4.9471874){1}
\usefont{T1}{ptm}{m}{n}
\rput(4.2185936,4.9471874){2}
\usefont{T1}{ptm}{m}{n}
\rput(3.3476562,2.8671875){3}
\usefont{T1}{ptm}{m}{n}
\rput(4.1295314,2.8471875){5}
\usefont{T1}{ptm}{m}{n}
\rput(1.726875,1.3471875){1}
\usefont{T1}{ptm}{m}{n}
\rput(2.5385938,1.3271875){2}
\usefont{T1}{ptm}{m}{n}
\rput(3.3476562,1.3271875){3}
\usefont{T1}{ptm}{m}{n}
\rput(4.1809373,1.3271875){4}
\usefont{T1}{ptm}{m}{n}
\rput(4.969531,1.3471875){5}
\usefont{T1}{ptm}{m}{n}
\rput(5.723906,1.3071876){13}
\usefont{T1}{ptm}{m}{n}
\rput(1.70875,-0.6528125){8}
\usefont{T1}{ptm}{m}{n}
\rput(2.5153124,-0.7128125){7}
\usefont{T1}{ptm}{m}{n}
\rput(3.3353126,-0.7128125){6}
\usefont{T1}{ptm}{m}{n}
\rput(4.1525,-0.6928125){10}
\usefont{T1}{ptm}{m}{n}
\rput(4.9751563,-0.6928125){9}
\usefont{T1}{ptm}{m}{n}
\rput(5.756875,-0.7128125){11}
\usefont{T1}{ptm}{m}{n}
\rput(0.126875,-2.2728126){1}
\usefont{T1}{ptm}{m}{n}
\rput(0.9585937,-2.2528124){2}
\usefont{T1}{ptm}{m}{n}
\rput(1.7409375,-2.2928126){4}
\usefont{T1}{ptm}{m}{n}
\rput(2.5295312,-2.2928126){5}
\usefont{T1}{ptm}{m}{n}
\rput(3.3153124,-2.2728126){7}
\usefont{T1}{ptm}{m}{n}
\rput(4.12875,-2.2728126){8}
\usefont{T1}{ptm}{m}{n}
\rput(4.9125,-2.3128126){10}
\usefont{T1}{ptm}{m}{n}
\rput(5.696875,-2.3128126){11}
\usefont{T1}{ptm}{m}{n}
\rput(6.5714064,-2.2728126){12}
\usefont{T1}{ptm}{m}{n}
\rput(7.3039064,-2.2928126){13}
\usefont{T1}{ptm}{m}{n}
\rput(0.08765625,-4.3128123){3}
\usefont{T1}{ptm}{m}{n}
\rput(0.8753125,-4.3528123){6}
\usefont{T1}{ptm}{m}{n}
\rput(1.6551563,-4.3728123){9}
\usefont{T1}{ptm}{m}{n}
\rput(2.5270312,-4.2928123){15}
\usefont{T1}{ptm}{m}{n}
\rput(3.2726562,-4.3328123){14}
\usefont{T1}{ptm}{m}{n}
\rput(4.0909376,-4.2928123){17}
\usefont{T1}{ptm}{m}{n}
\rput(4.9540625,-4.2928123){21}
\usefont{T1}{ptm}{m}{n}
\rput(5.75125,-4.3328123){19}
\usefont{T1}{ptm}{m}{n}
\rput(6.5285935,-4.3128123){18}
\usefont{T1}{ptm}{m}{n}
\rput(7.2920313,-4.3328123){16}
\psline[linewidth=0.04cm](3.34,4.4971876)(3.34,3.2771876)
\psline[linewidth=0.04cm](4.14,4.4971876)(4.14,3.2771876)
\psline[linewidth=0.04cm](1.72,0.9371875)(1.72,-0.2828125)
\psline[linewidth=0.04cm](2.54,0.9771875)(2.56,-0.3428125)
\psline[linewidth=0.04cm](3.34,0.9771875)(3.34,-0.4028125)
\psline[linewidth=0.04cm](4.16,0.9571875)(4.16,-0.3628125)
\psline[linewidth=0.04cm](4.92,0.9971875)(4.94,-0.3228125)
\psline[linewidth=0.04cm](5.74,0.9971875)(5.74,-0.2828125)
\psline[linewidth=0.04cm](0.1,-2.6028125)(0.1,-3.9228125)
\psline[linewidth=0.04cm](0.9,-2.6228125)(0.9,-3.9028125)
\psline[linewidth=0.04cm](1.7,-2.6428125)(1.7,-3.9428124)
\psline[linewidth=0.04cm](2.52,-2.6228125)(2.54,-3.9428124)
\psline[linewidth=0.04cm](3.32,-2.5828125)(3.32,-3.9428124)
\psline[linewidth=0.04cm](4.14,-2.5828125)(4.14,-3.9228125)
\psline[linewidth=0.04cm](4.94,-2.6228125)(4.92,-3.9228125)
\psline[linewidth=0.04cm](5.72,-2.5828125)(5.74,-3.8828125)
\psline[linewidth=0.04cm](6.5,-2.5828125)(6.52,-3.8828125)
\psline[linewidth=0.04cm](7.34,-2.5828125)(7.34,-3.8828125)
\usefont{T1}{ptm}{m}{n}
\rput(3.7414062,2.3471875){$2K_2$}
\usefont{T1}{ptm}{m}{n}
\rput(3.7414062,-1.2728125){$6K_2$}
\usefont{T1}{ptm}{m}{n}
\rput(3.7114062,-4.8928127){$10K_2$}
\end{pspicture}
}
\caption{$(2,1)$-hooked Skolem graceful labeling of $2K_2$, $6K_2$ and $10K_2$}{\label{fig4.1}}
\end{center}
\end{figure}
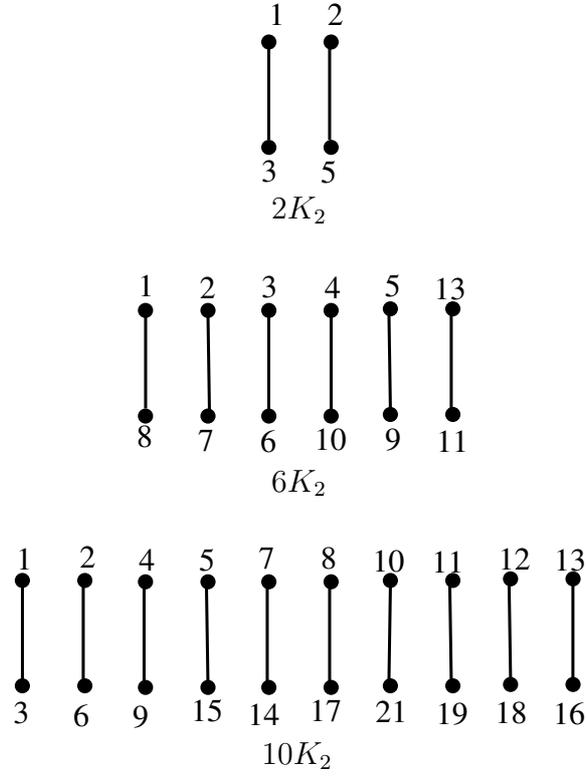

\ni For $r \geq 4$, we define the vertex labeling $f$ as follows:

\begin{eqnarray*}
f(a_i)=
\begin{cases}
i, & \text {for $i =1,2;$}\\
i+1, & \text {for $3 \leq i \leq 2r-2;$}\\
\frac{n+4}{2}, & \text {for $i= 2r-1;$}\\
\frac{3n+2}{4}, & \text {for $i= 2r;$}\\
\frac{n-4}{2}+i, & \text {for $2r+1 \leq i \leq n.$}
\end{cases}
\end{eqnarray*}

\begin{eqnarray*}
f(b_i)=
\begin{cases}
3, & \text {for $i=1;$}\\
\frac{n+2}{2}, & \text {for $i = 2;$}\\
n+2-i, & \text {for $3 \leq i \leq r;$}\\
n+1-i, & \text {for $r+1 \leq i \leq 2r-3;$}\\
\frac{3n}{2}, & \text {for $i = 2r-2;$}\\
\frac{3n-2}{2}, & \text {for $i = 2r-1;$}\\
\frac{7n-2}{4}, & \text {for $i = 2r;$}\\
2n+1, & \text {for $i = 2r+1;$}\\
\frac{5n+4}{2}-i, & \text {for $2r+2 \leq i \leq 3r;$}\\
\frac{5n+2}{2}-i, & \text {for $3r+1 \leq i \leq n.$}
\end{cases}
\end{eqnarray*}\\

\noindent
{\bf{Case 2:}}\quad $n \equiv 1(\bmod~4).$

Let $n = 4r - 3$, where $r$ is a positive integer.
For $r=1$ and $2$ the $(2,1)$-hooked Skolem graceful labelings of $K_2$ and $5K_2$ are given in Figure {\ref{fig4.2}}.

\begin{figure}[h!]
\begin{center}
\scalebox{1} 
{
\begin{pspicture}(0,-3.830625)(3.3990624,3.830625)
\psdots[dotsize=0.2](1.734375,3.3871875)
\psdots[dotsize=0.2](1.734375,1.9671875)
\psline[linewidth=0.04cm](1.734375,3.3271875)(1.734375,2.0671875)
\psdots[dotsize=0.2](0.114375,-1.0728126)
\psdots[dotsize=0.2](0.114375,-2.4928124)
\psline[linewidth=0.04cm](0.114375,-1.1328125)(0.114375,-2.3928125)
\psdots[dotsize=0.2](0.914375,-1.0528125)
\psdots[dotsize=0.2](0.914375,-2.4728124)
\psline[linewidth=0.04cm](0.914375,-1.1128125)(0.914375,-2.3728125)
\psdots[dotsize=0.2](1.714375,-1.0528125)
\psdots[dotsize=0.2](1.714375,-2.4728124)
\psline[linewidth=0.04cm](1.714375,-1.1128125)(1.714375,-2.3728125)
\psdots[dotsize=0.2](2.454375,-1.0528125)
\psdots[dotsize=0.2](2.454375,-2.4728124)
\psline[linewidth=0.04cm](2.454375,-1.1128125)(2.454375,-2.3728125)
\psdots[dotsize=0.2](3.274375,-1.0328125)
\psdots[dotsize=0.2](3.274375,-2.4528124)
\psline[linewidth=0.04cm](3.274375,-1.0928125)(3.274375,-2.3528125)
\usefont{T1}{ptm}{m}{n}
\rput(1.70125,3.6571875){1}
\usefont{T1}{ptm}{m}{n}
\rput(1.7220312,1.6371875){3}
\usefont{T1}{ptm}{m}{n}
\rput(0.10125,-0.7428125){1}
\usefont{T1}{ptm}{m}{n}
\rput(0.93296874,-0.7428125){2}
\usefont{T1}{ptm}{m}{n}
\rput(1.7220312,-0.7628125){3}
\usefont{T1}{ptm}{m}{n}
\rput(2.4439063,-0.7828125){5}
\usefont{T1}{ptm}{m}{n}
\rput(3.2496874,-0.7428125){7}
\usefont{T1}{ptm}{m}{n}
\rput(0.0753125,-2.8228126){4}
\usefont{T1}{ptm}{m}{n}
\rput(0.8896875,-2.8228126){6}
\usefont{T1}{ptm}{m}{n}
\rput(1.663125,-2.8228126){8}
\usefont{T1}{ptm}{m}{n}
\rput(2.43125,-2.8028126){11}
\usefont{T1}{ptm}{m}{n}
\rput(3.2695312,-2.7828126){9}
\usefont{T1}{ptm}{m}{n}
\rput(1.6457813,0.8171875){$K_2$}
\usefont{T1}{ptm}{m}{n}
\rput(1.6557813,-3.6028125){$5K_2$}
\end{pspicture}
}
\caption{$(2,1)$-hooked Skolem graceful labeling of $K_2$ and $5K_2$}{\label{fig4.2}}
\end{center}
\end{figure}

\ni For $r \geq 3$, we define the vertex labeling $f$ as follows:
\begin{align*}
f(a_i)&=
\begin{cases}
i, & \text {for $1 \leq i \leq 2r-1;$}\\
\frac{n-3}{2}+i, & \text {for $2r \leq i \leq 3r-2;$}\\
\frac{n-1}{2}+i, & \text {for $3r-1 \leq i \leq n;$}
\end{cases}
\end{align*}
\begin{align*}
f(b_i)&=
\begin{cases}
n-i, & \text {for $1 \leq i \leq r-1;$}\\
n-1+i, & \text {for $i = r, n;$}\\
n+1-i, & \text {for $r+1 \leq i \leq 2r-2;$}\\
\frac{3n+1}{2}, & \text {for $i = 2r-1;$}\\
2n+1, & \text {for $i = 2r;$}\\
\frac{5n+1}{2}-i, & \text {for $2r+1 \leq i \leq n-1.$}
\end{cases}
\end{align*}

\noindent
In each case, it can be easily verified that the induced edge function $g_f$ defined by $g_f(e_i) = |b_i - a_i|$ has the required properties to qualify $f$ to be a $(2, 1)$-hooked Skolem graceful labeling of $nK_2$ and the cases exhaust all the possibilities. This completes the proof.
\end{proof}

\section{ Conclusion and Scope}
In this paper, we have introduced the notion of $(k,d)$-hooked Skolem graceful graphs and observe that $k=d=1$ coincides with the notion of hooked Skolem graceful labeling of a graph $G$. We have given some necessary or sufficient conditions for a graph $G$ to be $(k,d)$-hooked Skolem graceful. We have proved that $nK_2$ is $(2,1)$-hooked Skolem graceful if and only if $n \equiv 1 ~ \mbox{or} ~ 2(\bmod~4)$. Determining the value of $n$ for which $nK_2$ is $(k,d)$-hooked Skolem graceful for given values of $k$ and $d$ is an open problem.

\newpage

\end{document}